\newcommand{\HH}{\mathbb{H}}
\newcommand{\Isom}{\mathrm{Isom}}
\newtheorem{theorem}{Theorem}[section]
\newtheorem{corollary}[theorem]{Corollary}
\newtheorem{proposition}[theorem]{Proposition}
\theoremstyle{remark}
\theoremstyle{remark}
\renewcommand{\qed}{\hfill$\scriptstyle\blacksquare$}
\title{Arithmetic trialitarian hyperbolic lattices are not LERF}
\author{Nikolay Bogachev}
\address{Department of Computer and Mathematical Sciences, University of Toronto Scarborough, 1095 Military Trail, Toronto, ON M1C 1A3, Canada}
\address{Institute for Information Transmission Problems, Moscow, Russia}
\email{n.bogachev@utoronto.ca}
\author{Leone Slavich}
\address{Dipartimento di Matematica, Universit\`a di Pavia, Via Ferrata 5, 27100 Pavia, Italy}
\email{leone.slavich@gmail.com}
\author{Hongbin Sun}
\address{Department of Mathematics, Rutgers University - New Brunswick, Hill Center, Busch Campus, Piscataway, NJ 08854, USA}
\email{hongbin.sun@rutgers.edu}
\begin{document}

\begin{abstract}
A group is LERF (locally extended residually finite) if all its finitely generated subgroups are separable. We prove that the trialitarian arithmetic lattices in $\mathbf{PSO}_{7,1}(\mathbb{R})$ are not LERF. This result, together with previous work by the third author, implies that no arithmetic lattice in $\mathbf{PO}_{n,1}(\mathbb{R})$, $n>3$, is LERF.
\end{abstract}

\maketitle

\section{Introduction}

Let $\HH^n$ denote the real hyperbolic
$n$-space and $\Isom(\HH^n)$ be its isometry group.
Here and below a \textit{hyperbolic lattice} is a discrete subgroup of $\Isom(\HH^n)$ admitting a finite-volume fundamental polyhedron in $\mathbb{H}^n$.

The main goal of this paper is to study separability properties of the exceptional family of arithmetic trialitarian lattices in $\mathrm{Isom}(\HH^7)$. Let $G$ be a group and $H<G$ a subgroup. We say that $H$ is \emph{separable} in $G$ if for every $g \in G \setminus H$ there exists a finite-index subgroup $K < G$ such that $g \not\in K$ but $H < K$. If all finitely generated subgroups of $G$ are separable, then $G$ is said to be {\em LERF} (i.e. {\em locally extended residually finite}). If $G$ is a subgroup of $\Isom(\HH^n)$, then $G$ is said to be {\em GFERF} (i.e. {\em geometrically finite extended residually finite}) if all geometrically finite subgroups of $G$ are separable.

Separability properties have profound implications in both geometric group theory and geometric topology. Finitely generated free groups are LERF, as proved by Hall \cite{Ha49}. By a theorem of Scott \cite{Sc78}, all finitely generated Fuchsian groups are LERF as well. That the same is true for finitely generated Kleinian groups is a deep result involving much of what is known about hyperbolic $3$-manifold groups; see Agol \cite{Agol13} and Wise \cite{Wise21} (for a detailed history of this result, see \cite{AFW}). On the other hand, the third author proved that almost all arithmetic hyperbolic lattices in higher dimensions fail to be LERF \cite{Sun19a, Sun19b}, though those of simplest type are nevertheless GFERF by work of Bergeron, Haglund, and Wise \cite{BHW11}.

The special $7$-dimensional arithmetic hyperbolic lattices that we consider are known as {\em trialitarian} lattices, and are characterised by the property of being commensurable with the group of integer points of a {\itshape trialitarian} $k$-form $\mathbf{G}$ of the real group $\mathbf{PSO}_{7,1}$, where $k$ is a totally real algebraic number field. This means that the natural action of the absolute Galois group $\mathrm{Gal}(\overline{k}/k)$ on $\mathbf{G}$ induces an order-$3$ automorphism of the Dynkin diagram of the $D_4$ root system \cite{Tits}. Given that $\mathbf{PSO}_{7,1}$ is an outer form of the complex group $\mathbf{PSO}_{8}$, it follows that this action is the full action by automorphisms of the diagram, i.e.\ the group $\mathbf{G}$ has Tits symbol $^{6}D_4$.

Very little is known about the properties of trialitarian hyperbolic lattices. The most notable result is due to Bergeron and Clozel \cite{BC17}. They prove that if $\Gamma$ is a congruence subgroup of a trialitarian group $\mathbf{G}$ as above, then $b_1(\Gamma)=0$. This is in sharp constrast with the case of all other arithmetic lattices in $\mathrm{Isom}(\HH^n)$, $n>3$, for which the existence of congruence covers with positive first Betti number is known (see \cite{Schwermer} for a review of the subject).
 
The main result of this paper is the following:

\begin{theorem}\label{main}
    Arithmetic trialitarian lattices in $\Isom(\HH^7)$ are not LERF.
\end{theorem}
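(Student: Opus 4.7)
The plan is to adapt the general approach used by the third author in \cite{Sun19a, Sun19b} for arithmetic lattices of simplest type and second type, working around the absence of the quadratic or Hermitian structure those proofs exploited. Fix a trialitarian arithmetic lattice $\Gamma<\Isom(\HH^7)$ commensurable with $\mathbf{G}(\OOO_k)$ and write $M=\HH^7/\Gamma$. The three broad steps are: (i) locate a nontrivial totally geodesic hyperbolic submanifold $N \hookrightarrow M$ stabilized by a well-understood sub-arithmetic lattice $\Delta<\Gamma$; (ii) construct, from $\pi_1(N)$ together with an extra element $g\in\Gamma$, an explicit finitely generated subgroup $H<\Gamma$ having the structure of an amalgamated product across a common lower-dimensional geodesic piece; and (iii) exhibit an element $\gamma\in\Gamma\setminus H$ that witnesses the non-separability of $H$ in $\Gamma$.

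The first step is to produce a totally geodesic hyperbolic subspace $\HH^m\subset\HH^7$ (with $m$ small, ideally $m=3$ or $m=4$) whose pointwise stabilizer in $\mathbf{G}$ is a $k$-defined algebraic subgroup. Since $\mathbf{G}$ has Tits symbol ${}^6D_4$, it is not the orthogonal group of a quadratic form over $k$, so such subspaces cannot be obtained by restricting a form; instead one must extract them from the intrinsic octonionic data, for instance from a Galois-invariant composition subalgebra or from the fixed locus of a $k$-defined involution commuting with the triality. A case analysis along these lines should yield a sub-arithmetic lattice $\Delta<\Gamma$ that is itself a hyperbolic lattice of a simpler (quaternionic or quadratic) type, on which the technology of \cite{Sun19a, Sun19b} becomes available.

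With $\Delta$ in hand, the second step models the Rubinstein--Wang / Niblo--Wise non-LERF graph-manifold construction used by Sun: take two isometric copies of a closed totally geodesic suborbifold of $N$, meeting along a common geodesic piece of strictly lower dimension, and amalgamate them via an element $g\in\Gamma$ that conjugates one copy to the other but lies outside the commensurator of the common piece. The resulting subgroup $H<\Gamma$ is finitely generated, and one then produces an element $\gamma\in\Gamma\setminus H$ (typically a commutator constructed from the amalgamation data) such that any finite-index subgroup of $\Gamma$ containing $H$ is forced, by an unwanted retraction/compatibility argument in the covers, to also contain $\gamma$; this exhibits the separability failure.

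The main obstacle I expect is the first step. For simplest-type and Hermitian lattices, totally geodesic subspaces of all admissible codimensions arise transparently from the defining form, but trialitarian forms are notoriously rigid --- a rigidity reflected in the Bergeron--Clozel result \cite{BC17} that congruence covers have vanishing first Betti number. Producing and classifying $k$-algebraic subgroups of $\mathbf{G}$ whose real points stabilize a geodesic submanifold of known arithmetic type, and verifying that the resulting $\Delta$ carries two closed geodesic suborbifolds in the configuration demanded by the Sun construction, requires a substantive analysis of the underlying trialitarian geometry. Once this geometric input is available, steps (ii) and (iii) should proceed largely by porting the arguments of \cite{Sun19a, Sun19b}, with care taken to verify that the non-separability obstruction survives intact in the ambient ${}^6D_4$-lattice and is not trivialised by elements of $\Gamma$ coming from outside the stabilizer of $N$.
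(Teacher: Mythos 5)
Your proposal correctly identifies the overall strategy and, importantly, correctly diagnoses where the difficulty lies: in the absence of a defining quadratic or Hermitian form, producing totally geodesic submanifolds of a trialitarian manifold requires a genuinely new argument, and you flag this as the ``main obstacle.'' However, that obstacle is precisely the technical heart of the theorem, and your proposal leaves it unresolved --- you sketch a direction (``Galois-invariant composition subalgebra,'' ``$k$-defined involution commuting with the triality'') without carrying it out. As written, this is a plan of attack rather than a proof.

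The paper's actual resolution of step (i) is worth spelling out because it is more specific than what you gesture at. Working inside the commensurator $\mathrm{Comm}(\Gamma)=\mathbf{G}(k)$, one takes a maximal $k$-torus $\mathbf{T}<\mathbf{G}$ on which the Galois action is explicitly known (it factors through $\mathfrak{S}_3\times\mathbb{Z}/2\mathbb{Z}$, with $\mathfrak{S}_3$ acting on the $D_4$ Dynkin diagram and $\mathbb{Z}/2\mathbb{Z}$ by the antipodal map). An order-$2$ element of $\mathbf{T}(k)$ is encoded by a partition $\Delta=\Delta_+\sqcup\Delta_-$ of the simple roots; rationality forces the partition to be $\mathfrak{S}_3$-stable, and there are exactly three such partitions, giving a Klein four-group $K<\mathbf{G}(k)$ of commuting involutions. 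An eigenvalue count on the Lie algebra then shows each nontrivial element of $K$ fixes a totally geodesic $\HH^3\subset\HH^7$, and the three copies of $\HH^3$ all intersect orthogonally along a common geodesic. The stabiliser of each $\HH^3$ in $\Gamma$ is a lattice because the involution lives in the commensurator --- this is exactly the content of BBKS Theorem 1.9 and makes unnecessary any attempt to identify the arithmetic ``type'' of these sub-lattices as quaternionic or quadratic.

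Steps (ii)--(iii) in your proposal are also not quite the route the paper takes, and are more elaborate than necessary. The paper does not amalgamate ``two isometric copies of a closed totally geodesic suborbifold of $N$'' via an outside conjugating element $g$, nor does it produce an explicit commutator-type witness $\gamma$ of non-separability and run the Rubinstein--Wang retraction argument by hand. Instead, the two $3$-manifolds $S_1, S_2$ are the genuinely distinct fixed loci of two of the involutions, meeting orthogonally along a closed geodesic $\gamma$. Lemma 7.1 of \cite{BHW11} (whose proof carries over once one knows stabilisers of geodesic subspaces are separable) produces a finite-index subgroup into which $\pi_1(S_1)*_{\mathbb{Z}}\pi_1(S_2)$ injects, and then Theorem 1.4 of \cite{Sun19b} --- that an amalgam of two closed hyperbolic $3$-manifold groups over $\mathbb{Z}$ is never LERF --- is invoked as a black box. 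Since LERF passes to subgroups, this finishes the proof. So the non-separability witness you propose to construct by hand is already encapsulated in that cited result, and trying to rebuild it in the trialitarian ambient group would be redundant.

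In short: the strategic outline is right and the diagnosis of where the work lies is accurate, but the proposal does not contain the essential construction --- the Klein four-group of $k$-rational involutions arising from Galois-stable partitions of the $D_4$ simple roots --- and without it the argument does not get off the ground.
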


By combining Theorem \ref{main} with \cite[Theorem 1.2]{Sun19a} and \cite[Theorems 1.1, 1.2]{Sun19b}, we obtain the following corollary:

\begin{corollary}
No arithmetic lattice in $\Isom(\HH^n)$ for $n>3$ is LERF.
\end{corollary}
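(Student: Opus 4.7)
The plan is to reduce the corollary to a case analysis based on the classification of arithmetic hyperbolic lattices up to commensurability, together with the standard fact that non-LERF is preserved under commensurability: a finitely generated non-separable subgroup inside a finite-index subgroup $\Gamma' < \Gamma$ produces a finitely generated non-separable subgroup of $\Gamma$ itself, so LERF-ness is a commensurability invariant. This reduces the problem to showing, for each commensurability class of arithmetic lattices in $\Isom(\HH^n)$ with $n>3$, that some representative fails to be LERF.

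By the Tits classification, arithmetic lattices in $\Isom(\HH^n)$ for $n>3$ fall into three families of commensurability classes, distinguished by the underlying algebraic $k$-group. The simplest-type family, defined by admissible quadratic forms over totally real number fields, fails LERF by \cite[Theorem 1.2]{Sun19a}. The non-simplest-type family, arising from skew-Hermitian forms over quaternion algebras (present in the dimensions where it is defined), is handled by \cite[Theorems 1.1, 1.2]{Sun19b}. In the single exceptional dimension $n=7$, there is the additional trialitarian family of Tits type ${}^{6}D_4$, which is not covered by either of the above and which is precisely the case addressed by Theorem \ref{main} of the present paper.

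Combining these three inputs, every commensurability class of arithmetic lattices in $\Isom(\HH^n)$, $n>3$, contains at least one non-LERF representative, and the commensurability-invariance of failure of LERF then yields the corollary. The only thing to check is that the enumerated families really exhaust all commensurability classes in every dimension $n>3$ — in particular, that no additional exceptional outer form contributes in low dimensions beyond the trialitarian one in dimension $7$. This is essentially a bookkeeping matter and poses no genuine obstacle; the substantive content lies entirely in Theorem \ref{main}, whose role is to close the one remaining gap left open by \cite{Sun19a, Sun19b}.
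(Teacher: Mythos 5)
Your proposal is correct and matches the paper's approach, which likewise deduces the corollary by combining Theorem~\ref{main} with \cite[Theorem 1.2]{Sun19a} and \cite[Theorems 1.1, 1.2]{Sun19b} according to the standard trichotomy of commensurability classes of arithmetic hyperbolic lattices (simplest type, quaternionic, and trialitarian in dimension $7$). The paper leaves the commensurability-invariance of LERF and the exhaustiveness of the classification implicit, whereas you spell them out, but there is no substantive difference.
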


It was demonstrated by the third author of this paper that many nonarithmetic lattices in $\Isom(\HH^n)$, $n>3$, also fail to be LERF; see \cite[Theorems 5.2, 5.3]{Sun19b}. We conjecture that no lattice in $\Isom(\HH^n)$ with $n>3$ is LERF.

\subsection*{Acknowledgements} 

    We are grateful to Misha Belolipetsky and Sami Douba for fruitful discussions, and for their comments on this note. We would also like to express our gratitude to David Futer for organizing the conference ``Groups around $3$-manifolds'', and to Daniel Wise for organizing the broader thematic program on Geometric Group Theory at the CRM in Montreal, where this work was initiated. The work of H.S. was partially supported by the Simons Collaboration Grant 615229. The work of L.S. was supported by the PRIN project ``Geometry and topology of manifolds'' project no.~F53D23002800001 and by the INdAM Institute.

\section{Trialitarian lattices and manifolds}\label{sec:trialitarian-lattices}
In this section, we recall the definition and basic properties of trialitarian hyperbolic manifolds.

Consider the hyperboloid model for the $7$-dimensional hyperbolic space:
$$\HH^7=\{(x_0,x_1,\dots,x_7)\in \mathbb{R}^7 \mid x_0>0,\; f(x_0,x_1,\dots,x_7)=-1\},$$
where $f(x_0,x_1,\dots,x_7)=-x_0^2+x_1^2+\dots+x_7^2$ denotes the (standard) form of signature $(7,1)$ and the Riemannian metric on $\HH^7$ is induced by the restriction of $f$ to the tangent bundle. With this model, the orientation-preserving isometry group of $\HH^7$ is easily seen to be isomorphic to the real Lie group $\mathbf{PSO}_{7,1}(\mathbb{R})$.

A \emph{trialitarian lattice} is any subgroup of $\mathbf{PSO}_{7,1}(\mathbb{R})$ which can be constructed as follows.
Consider a totally real algebraic number field $k$ and $\mathbf{G}$ a $k$-form of the real algebraic group $\mathbf{PSO}_{7,1}$ with Tits symbol ${^6}D_{4}$. We furthermore require that $\mathbf{G}$ is \emph{admissible}, meaning that all the groups $\mathbf{G}^{\sigma}$ obtained by applying a non-identity Galois embedding $\sigma:k\rightarrow \mathbb{R}$ to the polynomial equations that define $\mathbf{G}$ have a compact group $\mathbf{G}^{\sigma}(\mathbb{R})$ of real points isomorphic to $\mathbf{PSO}_{8}(\mathbb{R})$. Notice that algebraic groups with the above properties do indeed exist \cite{BC13}.

It follows from a celebrated theorem of Borel and Harish-Chandra \cite{BHC62} that the group $\mathbf{G}(\mathcal{O}_k)$ of integer points of $\mathbf{G}$ is an arithmetic lattice in $\mathbf{PSO}_{7,1}(\mathbb{R})$, i.e. it acts on hyperbolic space $\mathbb{H}^7$ properly discontinuously by isometries and with finite covolume.

A trialitarian lattice is any arithmetic lattice $\Gamma <\mathbf{PSO}_{7,1}(\mathbb{R})$ which is commensurable in the wide sense with a group of the form $\mathbf{G}(\mathcal{O}_k)$ as above. All trialitarian lattices are cocompact, meaning that their action on $\mathbb{H}^7$ admits a compact fundamental domain. A \emph{trialitarian hyperbolic orbifold} is an orbifold of the form $M=\mathbb{H}^7/\Gamma$ with $\Gamma$ a trialitarian lattice. It is a (closed) manifold if $\Gamma$ is also torsion-free. Notice that this latter condition can always be achieved up to passing to a finite-index subgroup $\Gamma' < \Gamma$ due to Selberg's lemma.

Let us consider a trialitarian lattice $\Gamma<\mathbf{PSO}_{7,1}(\mathbb{R})$. We may assume up to conjugation that $\Gamma$ is commensurable with $\mathbf{G}(\mathcal{O}_k)$. Since $\mathbf{G}$ is an adjoint algebraic  group, it follows from work of Vinberg \cite{Vin71} that $\Gamma$ is contained in the group of rational points $\mathbf{G}(k)$. Moreover, work of Borel \cite{Borel-adjoint} shows that the commensurator of $\Gamma$ in $\mathbf{G}(\mathbb{R})\cong \mathbf{PSO}_{7,1}(\mathbb{R})$ is given precisely by the group $\mathbf{G}(k)$.

\section{Geodesic subspaces}
In this section we prove that any hyperbolic trialitarian manifold $M=\mathbb{H}^7/\Gamma$ contains three immersed totally geodesic $3$-manifolds that intersect along a geodesic.

As in Section \ref{sec:trialitarian-lattices}, we consider the hyperboloid model for the $7$-dimensional hyperbolic space $\HH^7$ and identify the orientation-preserving isometry group of $\HH^7$ with $\mathbf{PSO}_{7,1}(\mathbb{R})$.

Consider the following three orientation-preserving isometric involutions of $\HH^7$:
\begin{align*}
\theta_1(x_0,x_1,x_2,x_3,x_4,x_5,x_6,x_7)=(x_0,x_1,x_2,x_3,-x_4,-x_5,-x_6,-x_7);\\
\theta_2(x_0,x_1,x_2,x_3,x_4,x_5,x_6,x_7)=(x_0,x_1,-x_2,-x_3,x_4,x_5,-x_6,-x_7);\\
\theta_3(x_0,x_1,x_2,x_3,x_4,x_5,x_6,x_7)=(x_0,x_1,-x_2,-x_3,-x_4,-x_5,x_6,x_7).
\end{align*}

It is clear that $\theta_3=\theta_1 \circ \theta_2$ and that, moreover, $\theta_1$ and $\theta_2$ commute. As such, the involutions $\theta_1$ and $\theta_2$ generate a subgroup $K$ of $\mathbf{PSO}_{7,1}(\mathbb{R})$ isomorphic to the Klein group $\mathbb{Z}/2\mathbb{Z} \times \mathbb{Z}/2\mathbb{Z}$.

The main result of this section is the following:
\begin{theorem}\label{prop:Klein-group-commensurator}
Let $\Gamma<\mathbf{PSO}_{7,1}(\mathbb{R})$ be a trialitarian lattice. The commensurator of $\Gamma$ in $\mathbf{PSO}_{7,1}(\mathbb{R})$ contains a subgroup which is conjugate to the group $K$.
\end{theorem}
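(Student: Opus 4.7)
The plan is to exploit the fact, recorded at the end of Section~\ref{sec:trialitarian-lattices}, that the commensurator of $\Gamma$ in $\mathbf{PSO}_{7,1}(\mathbb{R})$ equals $\mathbf{G}(k)$. The statement therefore amounts to producing a Klein four-subgroup of $\mathbf{G}(k)$ whose image under an identification $\mathbf{G}(\mathbb{R})\cong\mathbf{PSO}_{7,1}(\mathbb{R})$ is conjugate to $K$ in $\mathbf{PSO}_{7,1}(\mathbb{R})$.

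First I would pin down the conjugacy class of $K$ by geometric invariants. Each of the involutions $\theta_1,\theta_2,\theta_3$ fixes a totally geodesic copy of $\mathbb{H}^{3}$ in $\mathbb{H}^{7}$, and the three fixed subspaces are pairwise orthogonal and intersect in a common geodesic (the $x_0x_1$-axis). Two Klein four-subgroups of $\mathbf{PSO}_{7,1}(\mathbb{R})$ realising this geometric pattern are conjugate, so it is enough to produce a Klein four-subgroup of $\mathbf{G}(k)$ whose three nontrivial elements act on $\mathbb{H}^7$ as involutions with mutually orthogonal $\mathbb{H}^3$ fixed sets.

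The construction uses the triality structure of $\mathbf{G}$. Because the Tits symbol of $\mathbf{G}$ is ${}^6D_4$, the Galois action on the Dynkin diagram of $\mathbf{G}_{\overline{k}}\cong \mathbf{PSO}_{8,\overline{k}}$ realises the full $S_{3}$ of outer automorphisms of $D_4$. Inside $\mathbf{PSO}_{8}$ there is a canonical conjugacy class of Klein four-subgroups, characterised by the property that the three nontrivial involutions are cyclically permuted by triality; equivalently, such a subgroup $K_0$ has centraliser of type $D_2\times D_2$, the three $D_2$-pairs being the legs of the $D_4$ diagram exchanged by triality. This matches the centraliser $\mathrm{SO}(3,1)\times\mathrm{SO}(4)$ of each $\theta_i$ in $\mathbf{PSO}_{7,1}(\mathbb{R})$. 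Since $K_{0}$ is stable as a set under the Galois action (which only permutes its three nontrivial elements), it descends to a $k$-subgroup of $\mathbf{G}$, furnishing a Klein four-subgroup $K'$ of $\mathbf{G}(k)$.

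The main obstacle is verifying that $K'$, when viewed inside $\mathbf{G}(\mathbb{R})\cong \mathbf{PSO}_{7,1}(\mathbb{R})$, has the correct geometric type at the identity embedding and is not, say, a Klein four-subgroup whose involutions have $\mathbb{H}^{1}$- or $\mathbb{H}^{5}$-fixed sets. The admissibility of $\mathbf{G}$ constrains the real forms of the centraliser of $K'$: at every non-identity embedding the centraliser is a compact real form of $D_2\times D_2$, namely $\mathrm{SO}(4)\times\mathrm{SO}(4)$, while at the identity embedding the signature must accommodate the unique non-compact factor $\mathbf{PSO}_{7,1}(\mathbb{R})$. The only real form of $D_2\times D_2$ consistent with this sign constraint and with acting inside $\mathbf{PSO}_{7,1}(\mathbb{R})$ is $\mathrm{SO}(3,1)\times\mathrm{SO}(4)$, which gives precisely the centraliser type of each $\theta_i$. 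Carrying out this real-form bookkeeping, together with the triality descent, completes the argument.
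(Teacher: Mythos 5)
There is a genuine gap in your construction of the Klein four-subgroup of $\mathbf{G}(k)$. You propose to take a Klein four-subgroup $K_0$ of $\mathbf{PSO}_8(\overline{k})$ ``characterised by the property that the three nontrivial involutions are cyclically permuted by triality,'' observe that $K_0$ is Galois-stable as a set, and conclude that $K_0$ ``descends to a $k$-subgroup of $\mathbf{G}$, furnishing a Klein four-subgroup $K'$ of $\mathbf{G}(k)$.'' This last step does not follow: a Galois-stable \emph{subset} of $\mathbf{G}(\overline{k})$ gives a $k$-defined finite subgroup \emph{scheme}, but the $k$-\emph{points} of that scheme are only the Galois-\emph{fixed} elements. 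If, as you assert, the Galois action cyclically permutes the three nontrivial involutions, then none of them is fixed, and $K_0$ contributes only the identity to $\mathbf{G}(k)$. In other words, the hypothesis you build into $K_0$ (nontrivial triality permutation) is exactly what would \emph{prevent} it from yielding elements of $\mathbf{G}(k)$.

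The paper's proof goes the opposite way: it looks for order-$2$ elements of a well-chosen maximal $k$-torus $\mathbf{T}$ that are \emph{individually} fixed by the Galois action described in Proposition~\ref{prop:torusaction}. Such an element corresponds to a partition $\Delta=\Delta_+\cup\Delta_-$ of the simple roots that is invariant under the $\mathfrak{S}_3\times\mathbb{Z}/2\mathbb{Z}$ Galois action, and one checks there are exactly three nontrivial such partitions, \eqref{eq:root-partition-1}--\eqref{eq:root-partition-3}, whose corresponding elements $g_1,g_2,g_3\in\mathbf{T}(k)$ satisfy $g_3=g_1g_2$ and generate a Klein four-group. So each $g_i$ is triality-\emph{invariant}, not triality-permuted; the fact that the Galois action realises the full $\mathfrak{S}_3$ is used precisely to cut down the sixteen $2$-torsion elements of $\mathbf{T}(\overline{k})$ to the four that are $k$-rational, and this is where ${}^6D_4$ (as opposed to, say, ${}^1D_4$) enters. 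Your centraliser-type computation ($D_2\times D_2$) and the subsequent real-form discussion match the paper's $m(1)=12$ count, but they cannot rescue the argument unless you first replace the triality-permuted $K_0$ by a triality-fixed one, at which point you are essentially reproducing the paper's construction inside $\mathbf{T}(k)$. The closing real-form bookkeeping would also need more care: admissibility constrains the form at non-identity places but by itself does not immediately exclude other noncompact real forms of the centraliser at the identity place; the paper instead determines the fixed-point set dimension directly from the eigenvalue multiplicities $m(\pm1)$ of $\mathrm{Ad}(g_i)$ on $\mathfrak{so}_{7,1}$.
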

We notice that an analogous statement in the setting of arithmetic lattices in $\mathbf{PSL}_{2}(\mathbb{C})$ was proven by Lackenby, Long and Reid in \cite[Theorem 1.2]{LaLoRe}, and later played a key role in Lackenby's proof of the surface subgroup conjecture for arithmetic Kleinian groups \cite{Lackenby}.

Before proving the Theorem above, we review some key properties of trialitarian algebraic groups. We denote by $\Sigma$ the $D_4$ root system, realised as the set of $24$ vector of $\mathbb{R}^4$ obtained by permuting the entries of $(\pm1,\pm1,0,0)$, and by $\Delta=\{\alpha_0,\alpha_1,\alpha_2,\alpha_3\}$ the following set of simple roots:
\begin{equation*}
\alpha_0=(0,1,-1,0),\, \alpha_1=(1,-1,0,0),\, \alpha_2=(0,0,1,-1),\, \alpha_3=(0,0,1,1),
\end{equation*} with $\alpha_0$ corresponding to the ``central'' root in the Dynkin diagram (see Figure~\ref{fig:D4_dynkin}). The automorphism group of the Dynkin diagram for $\Delta$ is isomorphic to the symmetric group $\mathfrak{S}_3$, which acts by fixing $\alpha_0$ and permuting the other $3$ roots.

\begin{figure}[h]
    \centering
    \includegraphics[width=3.2cm]{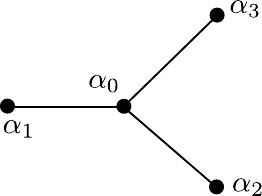}
    \caption{The Dynkin diagram of the $D_4$ root system, with roots labeled.}
   \label{fig:D4_dynkin}
\end{figure}

We recall the following result from \cite[proof of Proposition 3.17]{BBKS}:
\begin{proposition}\label{prop:torusaction}
Let $k$ be a totally real algebraic number field and $\mathbf{G}$ a $k$-form of the real algebraic group $\mathbf{PSO}_{7,1}$ with Tits symbol ${^6}D_{4}$. There exists a maximal $k$-torus $\mathbf{T}<\mathbf{G}$ with the following properties:
\begin{enumerate}
    \item[\textnormal{(i)}] The action of the absolute Galois group $\mathcal{G}$ of $k$ on the root system $\Sigma$ of $\mathbf{G}$ relative to $\mathbf{T}$ is isomorphic to $\mathfrak{S}_3 \times \mathbb{Z}/2\mathbb{Z}$, with the $\mathfrak{S}_3$ factor acting on the set $\Delta$ of simple roots by automorphisms of its Dynkin diagram, and $\mathbb{Z}/2\mathbb{Z}$ acting via the antipodal map.
    \item[\textnormal{(ii)}] The action of complex conjugation $\sigma \in \mathcal{G}$ is given by 
    \begin{equation}\label{eq:conjugation-action}(x,y,z,w) \xrightarrow{\sigma} (-x,-y,-z,w).\end{equation}
\end{enumerate}
\end{proposition}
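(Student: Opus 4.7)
The plan is to identify a conjugate of $K$ inside $\mathbf{G}(k)$, which by Borel's theorem (recalled in Section~\ref{sec:trialitarian-lattices}) equals the commensurator of $\Gamma$ in $\mathbf{PSO}_{7,1}(\mathbb{R})$. The candidate subgroup will be the Galois-invariant $2$-torsion of the maximal $k$-torus $\mathbf{T}$ provided by Proposition~\ref{prop:torusaction}.

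First I would compute $\mathbf{T}[2]^{\mathcal{G}}$. Since $\mathbf{T}$ has rank $4$, $\mathbf{T}[2](\overline{k}) = \mathrm{Hom}(X^*(\mathbf{T}), \mu_2) \cong \mathbb{F}_2^4$. The $\mathbb{Z}/2\mathbb{Z}$ factor of the Galois image is the antipodal map, which is inversion on $\mathbf{T}$ and so acts trivially on $\mathbf{T}[2]$; the effective Galois action therefore factors through $\mathfrak{S}_3$, permuting $\alpha_1, \alpha_2, \alpha_3$ and fixing $\alpha_0$. The invariants are precisely the homomorphisms $X^*(\mathbf{T}) \to \mu_2$ that are constant on the two $\mathfrak{S}_3$-orbits $\{\alpha_0\}$ and $\{\alpha_1, \alpha_2, \alpha_3\}$, yielding a Klein $4$-subgroup $V = \{1, t_1, t_2, t_1 t_2\} \subset \mathbf{T}(k) \subset \mathbf{G}(k)$, where $t_1$ is characterised by $\alpha_0(t_1) = -1$ and $\alpha_i(t_1) = 1$ for $i \in \{1,2,3\}$, and $t_2$ by $\alpha_0(t_2) = 1$ and $\alpha_i(t_2) = -1$ for $i \in \{1,2,3\}$.

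Next I would determine the geometric action of $V$ on $\mathbb{H}^7$. For each nontrivial $t \in V$, the centralizer root subsystem $\{\beta \in \Sigma : \beta(t) = 1\}$ is computed directly and turns out to be of type $A_1^{\oplus 4}$, so $\dim Z_{\mathbf{G}(\mathbb{R})}(t) = 4 + 2 \cdot 4 = 12$. Comparing with the formula $\dim Z_{\mathbf{PSO}_{7,1}(\mathbb{R})}(\iota) = \binom{k+1}{2} + \binom{7-k}{2}$ for an involution $\iota$ with $\Fix(\iota) \cong \mathbb{H}^k$, the unique match is $k = 3$: each nontrivial element of $V$ fixes a totally geodesic $\mathbb{H}^3$. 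From Proposition~\ref{prop:torusaction}(ii), complex conjugation has a one-dimensional $(+1)$-eigenspace on $X^*(\mathbf{T}) \otimes \mathbb{R}$, so the identity component $\mathbf{T}(\mathbb{R})^0$ is isomorphic to $\mathbb{R} \times (S^1)^3$, with a unique invariant geodesic axis $\ell \subset \mathbb{H}^7$ pointwise fixed by the compact part, hence by $V$. Since the three centralizer root subsystems of the nontrivial elements of $V$ have pairwise empty intersection, $Z_{\mathbf{G}(\mathbb{R})}(V)^0 = \mathbf{T}(\mathbb{R})^0$, which forces $\Fix(V) = \ell \cong \mathbb{H}^1$.

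To conclude, I would conjugate so that $\ell$ becomes the $\{x_0, x_1\}$-axis of the hyperboloid model. The action of $V$ on the normal space $\mathbb{R}^6 = \mathrm{span}(x_2, \ldots, x_7)$ is by three commuting orthogonal involutions, each with $(-1)$-eigenspace of dimension $4$. The character decomposition of this $(\mathbb{Z}/2\mathbb{Z})^2$-representation forces $\mathbb{R}^6 = W_1 \oplus W_2 \oplus W_3$ into three mutually orthogonal $2$-planes, each realising one of the three nontrivial characters of $V$, matching exactly the structure of the action of $K$ on the same subspace. A final conjugation by an element of $\mathrm{SO}(6) \subset \mathbf{PSO}_{7,1}(\mathbb{R})$ aligns the decompositions and identifies $V$ with $K$. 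The main technical obstacle is threading the abstract algebraic description of $V$ through the root-system centralizer computation and the real-form structure of $\mathbf{T}$ to extract the concrete geometric statement that each $t_i$ acts as a reflection in an $\mathbb{H}^3$ and that the three $\mathbb{H}^3$'s share a common geodesic.
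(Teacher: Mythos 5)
Your proposal does not prove the statement it was meant to prove. The statement is Proposition~\ref{prop:torusaction} itself: the \emph{existence} of a maximal $k$-torus $\mathbf{T}<\mathbf{G}$ such that the image of the absolute Galois group in the automorphism group of $\Sigma$ is $\mathfrak{S}_3\times\mathbb{Z}/2\mathbb{Z}$ acting as described, with complex conjugation acting by \eqref{eq:conjugation-action}. Your argument takes exactly this as an input (``the maximal $k$-torus $\mathbf{T}$ provided by Proposition~\ref{prop:torusaction}'') and then deduces that the Galois-invariant $2$-torsion of $\mathbf{T}$ is a Klein four-group in $\mathbf{G}(k)$ conjugate to $K$. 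In other words, what you have written is an outline proof of Theorem~\ref{prop:Klein-group-commensurator} (and, as such, it is close in spirit to the paper's actual proof of that theorem: your $t_1,t_2,t_1t_2$ are the involutions attached to the three Galois-stable partitions of $\Delta$, and your centraliser count $4+2\cdot 4=12$ is the paper's $m(1)=12$, $m(-1)=16$ computation). As a proof of Proposition~\ref{prop:torusaction} it is circular: nothing in it addresses why such a torus exists, why the $*$-action of $\mathrm{Gal}(\overline{k}/k)$ realises the full $\mathfrak{S}_3$ (this is precisely the content of the Tits symbol ${}^6D_4$), where the additional $\mathbb{Z}/2\mathbb{Z}$ acting by the antipodal map comes from, or why complex conjugation acts with a one-dimensional $(+1)$-eigenspace on $\mathbf{X}^*(\mathbf{T})\otimes\mathbb{R}$ (this encodes the admissibility condition, i.e.\ that $\mathbf{G}(\mathbb{R})\cong\mathbf{PSO}_{7,1}(\mathbb{R})$ has real rank $1$ while all other archimedean places give compact groups).

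These are arithmetic facts about trialitarian forms, and they require input that cannot be recovered from root-space bookkeeping carried out after the torus and its Galois action are already given: one must produce a maximal $k$-torus whose splitting field realises the required Galois image --- in effect a torus attached to the degree-$6$ \'etale algebra underlying the ${}^6D_4$ form, enlarged by the quadratic data responsible for the antipodal involution --- and then verify the signature condition at the identity real place to pin down the action of complex conjugation. The paper itself does not reprove this; it cites it from the proof of Proposition 3.17 of \cite{BBKS}. So either redirect your write-up explicitly at Theorem~\ref{prop:Klein-group-commensurator}, where it is essentially the paper's argument, or supply the missing arithmetic construction of $\mathbf{T}$ (or an explicit citation to it) if the goal is Proposition~\ref{prop:torusaction}.
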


We begin by proving the following Proposition, which essentially states that $\mathbf{T}$ contains a maximal $\mathbb{R}$-split torus:
\begin{proposition}\label{prop:maximal-split-torus}
The group $\mathbf{T}(\mathbb{R})$ of real points of the torus $\mathbf{T}$ is isogenous to the real Lie group $\mathbf{SO}_{1,1}(\mathbb{R}) \times \mathbf{SO}_2(\mathbb{R})^3$.
\end{proposition}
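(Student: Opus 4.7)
The plan is to apply the standard classification of $\mathbb{R}$-tori in terms of the Galois module structure on their character lattices. Recall that an $\mathbb{R}$-torus $\mathbf{S}$ is determined up to $\mathbb{R}$-isogeny by the rational character module $X^*(\mathbf{S})\otimes\mathbb{Q}$ regarded as a representation of $\mathrm{Gal}(\mathbb{C}/\mathbb{R})$. Over $\mathbb{Q}$ there are only three types of indecomposable summands: the trivial $1$-dimensional one, yielding the split torus $\mathbf{G}_m$ with real points $\mathbb{R}^*\cong\mathbf{SO}_{1,1}(\mathbb{R})$; the sign $1$-dimensional one, yielding the anisotropic norm-one torus $R^{(1)}_{\mathbb{C}/\mathbb{R}}\mathbf{G}_m$ with real points $S^1\cong\mathbf{SO}_2(\mathbb{R})$; and the $2$-dimensional permutation representation, yielding the Weil restriction $R_{\mathbb{C}/\mathbb{R}}\mathbf{G}_m$ with real points $\mathbb{C}^*$. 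Hence it suffices to decompose $X^*(\mathbf{T})\otimes\mathbb{Q}$ into isotypic pieces under complex conjugation $\sigma$.

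The key observation is that $\mathbf{G}=\mathbf{PSO}_{7,1}$ is adjoint and semisimple, so the character lattice of the maximal torus $\mathbf{T}$ is (up to finite index, which is irrelevant for isogeny) the root lattice of the $D_4$ root system, and in particular $X^*(\mathbf{T})\otimes\mathbb{R}$ is identified with the ambient $\mathbb{R}^4$ carrying $\Sigma$. Part (ii) of Proposition \ref{prop:torusaction} then describes the action of $\sigma$ on this $\mathbb{R}^4$ as the diagonal involution $(x,y,z,w)\mapsto(-x,-y,-z,w)$, with eigenvalues $(-1,-1,-1,+1)$. Consequently the Galois representation $X^*(\mathbf{T})\otimes\mathbb{Q}$ decomposes as one trivial summand (the $w$-axis) plus three sign summands (the $x$, $y$, $z$ axes), with no $2$-dimensional permutation summand appearing.

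Applying the classification, $\mathbf{T}$ is $\mathbb{R}$-isogenous to $\mathbf{G}_m \times \bigl(R^{(1)}_{\mathbb{C}/\mathbb{R}}\mathbf{G}_m\bigr)^3$, and taking real points gives the claimed isogeny $\mathbf{T}(\mathbb{R})\sim \mathbf{SO}_{1,1}(\mathbb{R})\times\mathbf{SO}_2(\mathbb{R})^3$. The argument presents essentially no obstacle; it is a direct computation of eigenspaces of an explicit involution, combined with a standard dictionary. The only point requiring care is the verification that the Galois action on the root system recorded in Proposition \ref{prop:torusaction}(ii) indeed describes the full Galois action on $X^*(\mathbf{T})\otimes\mathbb{Q}$, which is guaranteed precisely because the adjoint hypothesis forces the roots to span the character lattice over $\mathbb{Q}$.
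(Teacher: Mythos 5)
Your proof is correct and takes essentially the same route as the paper: both read off the eigenvalues of complex conjugation on $X^*(\mathbf{T})\otimes\mathbb{Q}$ from Proposition~\ref{prop:torusaction}(ii) and translate via the standard dictionary between $\mathrm{Gal}(\mathbb{C}/\mathbb{R})$-modules and real tori (the paper records only that the $+1$-eigenspace is one-dimensional, leaving the identification of the anisotropic part as $\mathbf{SO}_2(\mathbb{R})^3$ implicit). One small inaccuracy: the two-dimensional permutation representation of $\mathbb{Z}/2\mathbb{Z}$ is \emph{not} indecomposable over $\mathbb{Q}$ --- since $\mathbb{Q}[\mathbb{Z}/2\mathbb{Z}]$ is semisimple it splits as trivial $\oplus$ sign, and the ``three indecomposables'' statement is the integral ($\mathbb{Z}[\mathbb{Z}/2\mathbb{Z}]$-lattice) classification, not the rational one --- but this has no bearing on your argument since that summand does not arise.
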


\begin{proof}
The root system $\Sigma$ of $\mathbf{G}$ relative to $\mathbf{T}$ is identified in the obvious way with a subset of the character group $\mathbf{X}^*(\mathbf{T}) \cong \mathbb{Z}^4$ of the torus $\mathbf{T}$. The action of complex conjugation \eqref{eq:conjugation-action} is an involution with eigenvalues $1$ and $-1$ with multiplicities $1$ and $3$ respectively. As such there is a one-dimensional subspace of $\mathbf{X}^*(\mathbf{T})$ of $\mathbb{R}$-defined characters, implying that the maximal $\mathbb{R}$-split torus of $\mathbf{T}$ is one-dimensional. 
\end{proof}

\subsubsection*{Proof of Theorem~\ref{prop:Klein-group-commensurator}}
As explained in Section \ref{sec:trialitarian-lattices}, the commensurator of a trialitarian lattice $\Gamma$ can be identified with the group $\mathbf{G}(k)$ of rational points of the trialitarian group $\mathbf{G}$. As such we need to find a pair of commuting involutions in $\mathbf{G}(k)$ and check that they generate a group of hyperbolic isometries in $\mathbf{G}(\mathbb{R})\cong \mathbf{PSO}_{7,1}(\mathbb{R})$ which is conjugate to $K$. 

Recall that $\mathbf{G}$ is a connected, adjoint algebraic $k$-group. As such it is isomorphic to the group of its inner automorphisms. Any order-$2$ element in $\mathbf{G}(k)$ thus corresponds to an involutory inner $k$-automorphism of $\mathbf{G}$.

In order to find the generators for the group $K$, we look at the rational points $\mathbf{T}(k) < \mathbf{G}(k)$ of the maximal torus $\mathbf{T}$. As explained in \cite[Section 3.4.1]{BBKS}, conjugation by elements of $\mathbf{T}$ induces the trivial action on the root system $\Sigma$ of $\mathbf{G}$ relative to $\mathbf{T}$. An order $2$ element $g$ of $\mathbf{T}$ is thus required to act as $\pm\mathrm{id}$ on each root space $L_{\alpha},\;\alpha \in \Sigma$. 

Moreover the action of conjugation by $g$ on each root space is uniquely determined by the action on the root spaces for the set $\Delta$ of simple roots \cite[\S 14.2]{Hum}. As such any partition of the set $\Delta=\Delta_+ \cup \Delta_-$ into two subsets (with non-empty $\Delta_-$) determines an order-two element $g$ in $\mathbf{T}$ via the rule that $\theta=\mathrm{Inn}(g)$ acts as $+\mathrm{id}$ and $-\mathrm{id}$ on the root spaces for the roots in $\Delta_+$ and $\Delta_-$, respectively. The action on the other root spaces is determined as follows: every root $\alpha \in \Sigma$ is expressed as a linear combination of the roots $\alpha \in \Delta$ with integer coefficients:

$$\sum_{\alpha \in \Delta_+} n_{\alpha}\cdot \alpha + \sum_{\beta \in \Delta_-} m_{\beta}\cdot \beta.$$
The roots for which $\sum_{\beta \in \Delta_-}m_{\beta}$ is even (resp.\ odd) are called \emph{even} (resp.\ \emph{odd}). The set of even (resp.\ odd) roots is denoted by $\Sigma_+$ (resp.\ $\Sigma_-$). The automorphism $\theta$ acts as $+\mathrm{id}$ (resp.\ $-\mathrm{id}$) on the roots spaces for the roots of $\Sigma_+$ (resp.\ $\Sigma_-$). 

In order for $g$ to lie in the group $\mathbf{T}(k)$ of rational points, we need the inner automorphism $\mathrm{Inn}(g)$ to commute with the natural action of the absolute Galois group $\mathrm{Gal}(\overline{k}/k)$ of $k$. This is easily seen to translate to the requirement that this action preserves the sets $\Sigma_+$ and $\Sigma_-$.

The action of the absolute Galois group is completely described in Proposition~\ref{prop:torusaction}. The antipodal map maps every root to its opposite and therefore preserves both sets $\Sigma_+$ and $\Sigma_-$.
As such we need to choose the partition $\Delta=\Delta_+ \cup \Delta_-$ in such a way that both subsets are preserved by the action of the $\mathfrak{S}_3$ factor.
There are only $3$ possible choices:
\begin{align} 
\Delta_-=\{\alpha_0\}, &\; \Delta_+=\{\alpha_1,\alpha_2,\alpha_3\}; \label{eq:root-partition-1} \\ 
\Delta_+=\{\alpha_0\}, &\; \Delta_-=\{\alpha_1,\alpha_2,\alpha_3\}; \label{eq:root-partition-2} \\ 
\Delta_-=\Delta,\;\;\;\; &\; \Delta_+=\emptyset. \label{eq:root-partition-3}
\end{align}
Each choice corresponds to an involutory inner $k$-automorphism $\theta$ of $\mathbf{G}$, and thus to an order-two element $g \in \mathbf{G}(k)$ such that $\mathrm{Inn}(g)=\theta$. Let us denote by $\theta_1$, $\theta_2$ and $\theta_3$ the $k$-involutions of $\mathbf{G}$ described by \eqref{eq:root-partition-1}, \eqref{eq:root-partition-2} and \eqref{eq:root-partition-3} respectively, and by $g_1, g_2$ and $g_3$ the corresponding elements in $\mathbf{G}(k)$. It is clear that $\theta_3=\theta_1 \circ \theta_2$ and thus $g_3=g_1 \cdot g_2$. This implies that the group $\langle g_1,g_2\rangle<\mathbf{G}(k)$ is isomorphic to $\mathbb{Z}/2\mathbb{Z} \times \mathbb{Z}/2\mathbb{Z}$.

We now check that the fixed-point-set for the action on $\mathbb{H}^7$ of $g_i$, $i=1,2,3$, is $3$-dimensional. In order to do so, we compute the dimension of the centraliser of $g_i$ in $\mathbf{G}(\mathbb{R})\cong \mathbf{PSO}_{7,1}(\mathbb{R})$, i.e.\ the dimension of the fixed-point-set of $\theta_i$ in $\mathbf{PSO}_{7,1}(\mathbb{R})$.

Given that $\theta_i^2=\mathrm{id}$, it follows that the adjoint action of $\theta_i$ on the real Lie algebra $\mathfrak{so}_{7,1}$ is diagonalisable and has eigenvalues $1$ and $-1$ with multiplicities $m(1)$ and $m(-1)$ respectively. The multiplicities can be computed easily: $\theta_i$ acts as the identity on the $4$-dimensional algebra $\mathfrak{t}$ of $\mathbf{T}$ and on each one-dimensional root space $L_{\alpha}$ for $\alpha \in \Sigma_+$ an even root. It also acts as $-\mathrm{id}$ on each root space $L_{\alpha}$ for $\alpha \in \Sigma_-$ an odd root.

For all the possible choices (\eqref{eq:root-partition-1},~\eqref{eq:root-partition-2},~\eqref{eq:root-partition-3})  for $\Delta_+$ and $\Delta_-$ we obtain that $\Sigma_+$ has $8$ roots and $\Sigma_-$ has $16$ roots.
We list the roots in $\Sigma_+$ for each case:
\begin{align*}
\Sigma^1_+=\{(\pm1,\pm1,0,0)\}\cup \{(0,0,\pm1,\pm1)\},\\
\Sigma^2_+=\{(\pm1,0,0,\pm1)\}\cup \{(0,\pm1,\pm1,0)\},\\
\Sigma^3_+=\{(0,\pm1,0,\pm1)\}\cup \{(\pm1,0,\pm1,0)\}.
\end{align*}

It follows that we have $m(1)=12,\, m(-1)=16$. Since the resulting involution $\theta_i$ has order $2$, it can be represented by the conjugation by a matrix $M \in \mathbf{SO}_{7,1}(\mathbb{R})$ such that $M^2=\mathrm{id}$. Up to conjugacy in $\mathbf{SO}_{7,1}(\mathbb{R})$ we may assume that $M$ is diagonal with $\pm 1$ entries on the diagonal.
The only possibility such that the action on the Lie algebra has $m(1)=12$, $m(-1)=16$ is that $M$ has $4$ entries equal to $1$ and $4$ entries equal to $-1$. Such an $M$ corresponds to a reflection along a $4$ dimensional subspace $V$ in the quadratic space $(\mathbb{R}^8,f)$. Up to multiplication by $-\mathrm{Id}$, we may assume that $V$ intersects the hyperboloid $\mathbb{H}^7$ in a $3$-dimensional totally geodesic subspace, and we denote by $V_i$, $i=1,2,3$, the fixed-point-set of the reflection for each of the possible choices \eqref{eq:root-partition-1},~\eqref{eq:root-partition-2},~\eqref{eq:root-partition-3}.

Notice how the three possible sets of even roots $\Sigma^i_+$ form a partition of $\Sigma$ into $3$ sets of $8$ elements. 
Since any two such sets are disjoint,
it follows that the intersection of the centralisers of $\theta_i$ and $\theta_j$ (and, in fact, the centraliser of all three involutions) is given by the maximal torus $\mathbf{T}(\mathbb{R})$. In fact it is easy to see that there is an isomorphism
\begin{equation}\label{eq:max-R-torus}
\mathbf{T}(\mathbb{R}) \cong \frac{\mathbf{SO}_{1,1}(\mathbb{R}) \times \mathbf{SO}_2(\mathbb{R})^3}{\mathbb{Z}/2\mathbb{Z}},
    \end{equation}
with $\mathbb{Z}/2\mathbb{Z}$ acting ``diagonally'' as $-\mathrm{id}$ in each factor. The group $\mathbf{SO}_{1,1}(\mathbb{R}) \times \mathbf{SO}_2(\mathbb{R})^3$ is a maximal torus in $\mathbf{SO}_{7,1}(\mathbb{R})$ and its factors correspond to an orthogonal direct sum decomposition of the quadratic space $(\mathbb{R}^8,f)$:
\begin{equation}\label{eq:torus-decomposition}
(\mathbb{R}^8,f)=(W_0,f_0)\oplus(W_1,f_1)\oplus(W_2,f_2)\oplus(W_3,f_3)
\end{equation} where $\mathrm{dim}(W_i)=2$ and $f_0$ has signature $(1,1)$ while $f_1,f_2,f_3$ are positive definite. 

A direct computation shows that, for any possible choice of $\Delta_+$ and $\Delta_-$ as in \eqref{eq:root-partition-1},~\eqref{eq:root-partition-2},~\eqref{eq:root-partition-3}, there exists a reflection in a subspace of the form $W_0\oplus W_i$, $i=1,2,3$ that acts as $+\mathrm{id}$ (resp.\ $-\mathrm{id}$) on the root spaces for the roots of $\Delta_+$ (resp.\ $\Delta_-$).

This is sufficient to conclude that, up to an appropriate choice of the indices, $V_i=W_0 \oplus W_i$, and this implies that the group $\langle g_1,g_2 \rangle < \mathbf{G}(k)=\mathrm{Comm}(\Gamma)$ is conjugate to $K$ in $\mathbf{PSO}_{7,1}(\mathbb{R})$. \qed

\medskip
We conclude this section by using Theorem~\ref{prop:Klein-group-commensurator} to prove the following result:

\begin{proposition}\label{prop:amalgamation}
Let $M=\mathbb{H}^7/\Gamma$ be a trialitarian manifold. Then $M$ contains a closed, immersed geodesic $\gamma$ and $3$-dimensional closed, immersed, totally geodesic subspaces $S_1$, $S_2$ and $S_3$ with the property that, for any $i \neq j$, $\gamma\subset S_i \cap S_j$ and $S_i$ intesects $S_j$ orthogonally along $\gamma$.
\end{proposition}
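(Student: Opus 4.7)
The plan is to harvest the Klein group $K$ furnished by Theorem~\ref{prop:Klein-group-commensurator} and take its fixed-point sets as the candidates for $\gamma$ and the $S_i$; arithmeticity will then force these subspaces to descend to closed immersed submanifolds of $M$.

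After conjugating $\Gamma$ (which only alters $M$ by an isometry of $\HH^7$), I would arrange that $K$ itself lies in $\Comm(\Gamma)=\mathbf{G}(k)$. Using the orthogonal decomposition $\R^8=W_0\oplus W_1\oplus W_2\oplus W_3$ of~\eqref{eq:torus-decomposition}, the three involutions have as fixed-point sets the $3$-dimensional totally geodesic subspaces $V_i=(W_0\oplus W_i)\cap\HH^7$, while their common intersection is the geodesic line $\ell=W_0\cap\HH^7$. At any $p\in\ell$ we have $T_pV_i=T_p\ell\oplus W_i$; since the $W_j$ are pairwise orthogonal, $V_i$ and $V_j$ meet orthogonally along $\ell$ for $i\neq j$.

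The core step is to show that each $V_i$ projects to a closed immersed totally geodesic $3$-submanifold $S_i\subset M$ and that $\ell$ projects to a closed geodesic $\gamma\subset M$. For this, consider the centraliser $\mathbf{H}_i=Z_{\mathbf{G}}(g_i)$ of the involution $g_i$ in $\mathbf{G}$; this is a reductive $k$-algebraic subgroup, since $g_i\in\mathbf{G}(k)$. Its real points preserve $V_i$ and surject, with compact kernel isomorphic to $\mathbf{SO}(V_i^\perp)$, onto the identity component of $\Isom(V_i)$. By Borel--Harish-Chandra, $\mathbf{H}_i(\mathcal{O}_k)$ is a lattice in $\mathbf{H}_i(\R)$, whose image in $\Isom(V_i)$ is therefore also a lattice. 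Since $\Gamma$ is commensurable with $\mathbf{G}(\mathcal{O}_k)$, the setwise stabiliser $\mathrm{Stab}_{\Gamma}(V_i)$ is commensurable with $\mathbf{H}_i(\mathcal{O}_k)$ and so acts on $V_i$ as a cocompact lattice (cocompactness being inherited from $\Gamma$). An analogous argument applied to the common centraliser of $K$ in $\mathbf{G}$ --- which essentially recovers the maximal $k$-torus $\mathbf{T}$ of Proposition~\ref{prop:torusaction} whose $\R$-split $\mathbf{SO}_{1,1}$-factor translates along $\ell$ --- produces a cocompact lattice in $\Isom(\ell)$ inside $\mathrm{Stab}_{\Gamma}(\ell)$, so $\ell$ descends to a closed geodesic $\gamma$. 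The inclusions $\ell\subset V_i$ then descend to $\gamma\subset S_i$, and the ambient orthogonality gives orthogonal intersection along $\gamma$.

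I expect the main obstacle to be tracking the commensurability between $\mathrm{Stab}_{\Gamma}(V_i)$ and $\mathbf{H}_i(\mathcal{O}_k)$: although $\Gamma$ is commensurable with $\mathbf{G}(\mathcal{O}_k)$, the involutions $g_i$ themselves need not lie in $\Gamma$, so one must take care that the stabilisers of $V_i$ in these two commensurable overgroups remain commensurable after restriction. It may also be convenient to pass first to a torsion-free finite-index subgroup $\Gamma'<\Gamma$ (still a trialitarian lattice) to ensure that the actions on the $V_i$ and on $\ell$ are free, after which the conclusion for $M$ follows by pushing the immersions forward to the original quotient. Once this lattice matching is secured the remainder --- including the orthogonality of $S_i$ and $S_j$ along $\gamma$ --- is formal from the ambient picture.
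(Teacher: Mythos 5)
Your proposal is correct in outline and reaches the same geometric picture as the paper, but it reaches the key step---that the stabilisers $\mathrm{Stab}_\Gamma(V_i)$ and $\mathrm{Stab}_\Gamma(\ell)$ act as cocompact lattices---by a genuinely different route. The paper simply invokes Proposition~\ref{prop:fc-subspace} (i.e.\ \cite[Theorem~1.9]{BBKS}), a general statement for \emph{any} hyperbolic lattice with a finite subgroup of the commensurator fixing an $m$-plane, $m\geq 2$, and then disposes of the geodesic $\gamma$ with a brief appeal to cocompactness of $\Gamma$. You instead re-derive the arithmetic case from scratch: you pass to the $k$-defined centralisers $\mathbf{H}_i=Z_{\mathbf{G}}(g_i)$, observe that these are reductive (in fact, inspecting the eight even roots $\Sigma^i_+\cong A_1^4$ shows they are semisimple, so no $k$-character issue arises for Borel--Harish-Chandra), and conclude that $\mathbf{H}_i(\mathcal{O}_k)$ projects to a cocompact lattice in $\Isom(V_i)$; for $\gamma$ you use the $k$-anisotropic, $\R$-isotropic torus $\mathbf{T}$, whose $\mathbf{SO}_{1,1}$-factor supplies a loxodromic translating along $\ell$. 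The trade-off is clear: the paper's citation is shorter and works verbatim for non-arithmetic lattices, while your argument is more self-contained and---for the geodesic $\gamma$, where Proposition~\ref{prop:fc-subspace} does not literally apply because $m=1$---arguably more explicit than the paper's one-line claim. The commensurability bookkeeping you flag as a potential obstacle is indeed needed but standard (stabilisers in commensurable overgroups of a common $k$-subvariety's real points are commensurable, and $\mathbf{G}(\mathcal{O}_k)\cap\mathbf{H}_i(k)$ is commensurable with $\mathbf{H}_i(\mathcal{O}_k)$), so it is not a gap, merely a detail to be written out.
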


We will make use of the following result \cite[Theorem 1.9]{BBKS}.
\begin{proposition}\label{prop:fc-subspace}
Let $\Gamma < \Isom(\HH^n)$ be a lattice and $F < \Isom(\HH^n)$ be a finite subgroup, such that $H = \mathrm{Fix}(F)$ is an $m$--dimensional plane in $\HH^n$, with $m\geq 2$. If $F < \mathrm{Comm}(\Gamma)$, then the stabiliser $\mathrm{Stab}_{\Gamma}(H)$ of $H$ in $\Gamma$ is a lattice acting on $H$.
\end{proposition}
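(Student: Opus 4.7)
By Theorem~\ref{prop:Klein-group-commensurator}, after conjugating we may assume $K = \langle \theta_1, \theta_2, \theta_3\rangle \subset \Comm(\Gamma) = \mathbf{G}(k)$. Tracing the proof of that theorem, each fixed set $H_i := \Fix(\theta_i)$ is the totally geodesic $3$-plane $(W_0 \oplus W_i)\cap\HH^7$; the three $H_i$'s pairwise meet exactly along the geodesic $\ell := W_0 \cap \HH^7 = \Fix(K)$, and the orthogonality of the decomposition $\R^8 = W_0 \oplus W_1 \oplus W_2 \oplus W_3$ makes these intersections orthogonal. The totally geodesic $3$-submanifolds $S_i$ come directly from Proposition~\ref{prop:fc-subspace} applied to $\Gamma$ with $F = \langle \theta_i\rangle$ and $H = H_i$ (so that $\dim H_i = 3 \geq 2$): this yields that $\mathrm{Stab}_\Gamma(H_i)$ is a lattice acting on $H_i$, cocompact because trialitarian lattices are cocompact. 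Hence $S_i := H_i/\mathrm{Stab}_\Gamma(H_i)$ is a closed immersed totally geodesic $3$-submanifold of $M$.

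For the closed geodesic $\gamma$ I must produce a hyperbolic element of $\Gamma$ preserving $\ell$. I use the maximal $k$-torus $\mathbf{T}$ of Proposition~\ref{prop:torusaction}: in the proof of Theorem~\ref{prop:Klein-group-commensurator} the elements $g_i$ realising $\theta_i$ all lie in $\mathbf{T}(k)$, so $K \subset \mathbf{T}(k)$. Since $\mathbf{T}$ is abelian, $\mathbf{T}(k)$ centralises $K$ and preserves $\ell = \Fix(K)$; its action on $\ell$ factors through the one-dimensional maximal $\R$-split subtorus of $\mathbf{T}(\R)$ provided by Proposition~\ref{prop:maximal-split-torus}, isogenous to $\mathbf{SO}_{1,1}(\R)$ and acting by translations. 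Admissibility of $\mathbf{G}$ makes $\mathbf{T}^\sigma(\R)$ compact for every non-identity real Galois embedding $\sigma$, while cocompactness of $\Gamma$ forces $\mathbf{G}$, and hence $\mathbf{T}$, to be $k$-anisotropic. The Dirichlet unit theorem for algebraic tori then gives that $\mathbf{T}(\mathcal{O}_k)$ is a finitely generated abelian group of rank $\sum_v \mathrm{rank}_{k_v}(\mathbf{T}) - \mathrm{rank}_k(\mathbf{T}) = 1$. An infinite-order element $t \in \mathbf{T}(\mathcal{O}_k)$ cannot project trivially to $\mathbf{SO}_{1,1}(\R)$: the complementary subgroup $\mathbf{SO}_2(\R)^3 \times \prod_{\sigma \neq \mathrm{id}}\mathbf{T}^\sigma(\R)$ is compact, and an infinite cyclic subgroup of a compact group cannot be discrete. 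Hence $t$ translates $\ell$ non-trivially, and some power $t^n$ lies in $\Gamma$ (since $\Gamma$ is commensurable with $\mathbf{G}(\mathcal{O}_k)$), yielding the required hyperbolic element and the closed immersed geodesic $\gamma = \ell/\langle t^n\rangle$ in $M$.

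The inclusion $\gamma \subset S_i \cap S_j$ for $i \neq j$ is immediate from $\ell \subset H_i \cap H_j$, and the orthogonality of the $S_i$'s along $\gamma$ descends from that of the $H_i$'s along $\ell$. The most delicate step is the arithmetic input: verifying via Dirichlet's theorem for algebraic tori that $\mathbf{T}(\mathcal{O}_k)$ has rank exactly $1$ (combining $k$-anisotropy with the admissibility-induced compactness of non-identity Galois conjugates) and ruling out via discreteness that an infinite-order element of $\mathbf{T}(\mathcal{O}_k)$ has trivial translation component along $\ell$.
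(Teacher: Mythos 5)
Your proposal does not prove the statement it was asked to prove. The statement is Proposition~\ref{prop:fc-subspace}: for an \emph{arbitrary} lattice $\Gamma<\Isom(\HH^n)$ and an \emph{arbitrary} finite subgroup $F<\Comm(\Gamma)$ whose fixed-point set $H$ is an $m$-plane with $m\geq 2$, the stabiliser $\mathrm{Stab}_\Gamma(H)$ is a lattice acting on $H$ (in particular, it acts on $H$ with finite covolume). What you have written is instead an argument for Proposition~\ref{prop:amalgamation}, the downstream application to trialitarian manifolds, and in it you explicitly invoke Proposition~\ref{prop:fc-subspace} as a black box (``The totally geodesic $3$-submanifolds $S_i$ come directly from Proposition~\ref{prop:fc-subspace} applied to \dots''). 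Relative to the assigned task this is circular: the one thing that needed justification --- why $\mathrm{Stab}_\Gamma(H_i)$ acts on $H_i$ with finite covolume --- is exactly the content you assume. Nothing in your text addresses the general statement: there is no use of the hypothesis $m\geq 2$, no argument that the finite group $F$ can be combined with (a finite-index subgroup of) $\Gamma$ into a single discrete group, and no finiteness-of-covolume argument for the action on $H$. For the record, the paper itself does not prove this proposition either; it quotes it as Theorem~1.9 of \cite{BBKS}. A genuine blind proof would have to do something like: replace $\Gamma$ by $\bigcap_{f\in F} f\Gamma f^{-1}$, which has finite index in $\Gamma$ because $F<\Comm(\Gamma)$ and is normalised by $F$, so that $\Lambda=\langle \bigcap_{f\in F} f\Gamma f^{-1}, F\rangle$ is again a lattice containing $F$; then show that the fixed locus $H$ of the finite subgroup $F<\Lambda$ projects to a finite-volume totally geodesic suborbifold of $\HH^n/\Lambda$, whose fundamental group is commensurable with $\mathrm{Stab}_\Gamma(H)$. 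None of these steps appear in your proposal.

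A secondary remark: even read as a proof of Proposition~\ref{prop:amalgamation}, your argument imports machinery the paper does not need (rank computations for $\mathbf{T}(\mathcal{O}_k)$ via the Dirichlet unit theorem to manufacture a hyperbolic element along $\ell$); the paper gets closedness of $\gamma$ directly from cocompactness of trialitarian lattices. But the main issue stands: the assigned statement is the general separability-free lattice statement about $\mathrm{Stab}_\Gamma(\Fix(F))$, and your text neither proves it nor engages with it.
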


\subsubsection*{Proof of Proposition~\ref{prop:amalgamation}}
We apply Proposition \ref{prop:fc-subspace} with $\Gamma$ a torsion-free trialitarian lattice and $F_1,\,F_2,\, F_3\cong \mathbb{Z}/2\mathbb{Z}$ the groups generated by the nontrivial elements in the group $K<\mathrm{Comm}(\Gamma)$ of Theorem~\ref{prop:Klein-group-commensurator}. We denote by $H_1$, $H_2$ and $H_3$ the fixed point sets in $\mathbb{H}^7$ of $F_1$, $F_2$ and $F_3$ respectively, which are totally geodesic copies of $\mathbb{H}^3$. We obtain that each $S_i=H_i/\mathrm{Stab}_{\Gamma}(H_i)$, $i=1,2,3$, is a closed, totally geodesic, immersed hyperbolic $3$-manifold in $M$.

Denote by $\widetilde{\gamma} \subset \mathbb{H}^7$ the geodesic at the intersection of $H_1$, $H_2$ and $H_3$.
The intersection of any two geodesic subspace $S_i \cap S_j$ contains the image $\gamma$ of $\widetilde{\gamma}$ under the covering map $\mathbb{H}^7 \rightarrow M= \mathbb{H}^7/\Gamma$. Since $\Gamma$ is a cocompact lattice, $\gamma$ is a closed, immersed geodesic in $M$. Since $H_i$ intersects $H_j$ orthogonally along $\widetilde{\gamma}$, any two geodesic subspaces $S_i$ and $S_j$ intersect orthogonally along $\gamma$. \qed

\section{Proof of Theorem~\ref{main}}

Given Proposition \ref{prop:amalgamation}, we are ready to prove the nonLERFness of trialitarian lattices in $\mathbf{PSO}_{7,1}(\mathbb{R})$.

\begin{proof}[Proof of Theorem~\ref{main}]
By Selberg's lemma, any trialitarian lattice $\Gamma$ contains a torsion-free finite-index subgroup $\Gamma'$. Then $M=\mathbb{H}^7/\Gamma'$ is a trialitarian manifold.

By Proposition \ref{prop:amalgamation}, there are two closed hyperbolic $3$-manifolds $S_1$ and $S_2$ that immerse into $M$ as totally geodesic subspaces, such that $S_1$ and $S_2$ intersect orthogonally along a closed geodesic $\gamma$. We only use two of the three totally geodesic submanifolds in Proposition \ref{prop:amalgamation}.

Let us give the closed geodesic $\gamma$ an orientation. This gives a closed oriented geodesic $c_i$ in each $S_i$, with $i=1,2$. Let $\mathbb{Z}\to \pi_1(S_i)$ be the group inclusion that maps $1$ to the element represented by $c_i$, and we denote by $\pi_1(S_1)*_{\mathbb{Z}}\pi_1(S_2)$ the amalgamation of $\pi_1(S_1)$ and $\pi_1(S_2)$ along $\mathbb{Z}$. By \cite[Lemma 7.1]{BHW11} there exists a finite-index subgroup $\Gamma''$ of $\Gamma'$ such that $\pi_1(S_1) < \Gamma''$, $S_2$ lifts to an immersed geodesic $3$-manifold $S_2'$ in $\mathbb{H}^7/\Gamma''$, and such that the amalgamation $\pi_1(S_1)*_{\mathbb{Z}}\pi_1(S_2')$ injects in $\Gamma''$ (and thus in $\Gamma$). We notice that Lemma 7.1 of \cite{BHW11} is stated only for arithmetic lattices of simplest type, but the proof applies without modifications to the case of trialitarian lattices thanks to the separability of the stabilisers of totally geodesic subspaces in hyperbolic lattices (see \cite[Lemma 1.8]{BHW11}).

By Theorem 1.4 of \cite{Sun19b}, $\pi_1(S_1)*_{\mathbb{Z}}\pi_1(S_2')$ is not LERF, since it is an amalgamation of two hyperbolic $3$-manifold groups along an infinite cyclic group. Since we have that $\pi_1(S_1)*_{\mathbb{Z}}\pi_1(S_2')$ is a subgroup of $\Gamma$ and the property of being LERF passes to subgroups, we see that $\Gamma$ is not LERF.
\end{proof}

\bibliography{biblio.bib}{}
\bibliographystyle{siam}

\end{document}